\documentclass{amsart}
\pdfoutput=1
\usepackage{amsfonts,amsthm,,amsmath,amssymb,amscd,amsmath}
\usepackage{graphicx}
\usepackage{color, xifthen}

\def\pedantic{1}
\def\pedant#1{\ifthenelse{\pedantic=1}{{ #1}}{}}

\def\Real{{\mathbb R}}
\def\R{{\mathbb R}}

\def\Int{{\mathbb Z}}
\def\d{{\partial}}
\def\vx{{\vec{x}}}

\newtheorem{theorem}{Theorem}[section]
\newtheorem{corollary}[theorem]{Corollary}
\newtheorem{lemma}[theorem]{Lemma}

\newtheorem{theo+}{Theorem}
\newtheorem{prop+}{Proposition}
\newtheorem{coro+}{Corollary}
\newtheorem{lemm+}{Lemma}

\theoremstyle{definition}
\newtheorem{defi+}{Definition}

\theoremstyle{remark}
\newtheorem{rema+}{Remark}

\def\body{{\mathcal B}}

\def\conf{{\mathtt{Conf}}}
\def\config{\conf}
\def\confdb{{\conf}}  

\def\tauf{{\mathbf \tau}}

\begin{document}

\title{Min-type Morse theory for configuration spaces of hard spheres}

\author{Yuliy Baryshnikov}
\address{Departments of Mathematics and ECE, UIUC, Urbana, IL}
\email{ymb@uiuc.edu}

\author{Peter Bubenik}
\address{Department of Mathematics, Cleveland State University}
\email{p.bubenik@csuohio.edu}

\author{Matthew Kahle}
\address{School of Mathematics, Institute for Advanced Study, Princeton NJ 08540}
\email{mkahle@math.ias.edu}
\thanks{The third author thanks IAS and NSA Grant \# H98230-10-1-0227}

\dedicatory{We dedicate this paper to the memory of Boris Lubachevsky.}
\maketitle

\begin{abstract} In this paper we study configuration spaces of
  hard spheres in a bounded region.  We develop a general Morse-theoretic framework
  and show that mechanically balanced configurations play the role of critical points.  As an
  application, we find the precise threshold radius for a configuration space to be homotopy equivalent to the configuration
  space of points.
\end{abstract}


\section{Introduction} 

Configuration spaces of $n$ points in $\Real^d$ are well studied \cite{Cohen}. In this article we are interested in a natural generalization, configuration spaces of non-overlapping balls in a bounded region in $\Real^d$. 

Besides their intrinsic mathematical interest, the study of these spaces is motivated by physical considerations.  For example, in statistical mechanics ``hard spheres'' (or in two dimensions ``hard disks'') are among the most well-studied models of matter.  Computer simulations suggest a solid-liquid phase transition for hard spheres \cite{fun}, but this is not well understood mathematically.

A number of papers in statistical mechanics have explored the hypothesis that underpinning phase transitions are changes in the topology of the underlying configuration space or equipotential submanifolds \cite{Teix, Grinza, exact,  FF}.  Franzosi, Pettini, and Spinelli show that under fairly general conditions (smooth, finite-range, confining potentials), the Helmholtz free energy cannot pass through a phase transition unless there is a change in the topology of the underlying configuration space \cite{FPS1,FPS2}.  This theorem unfortunately does not apply to configuration spaces of hard spheres, since the potential function is not smooth --- but the Morse-theoretic methods developed here may be a step in the direction of extending it to include hard spheres.  

Several other papers have investigated configuration spaces as models of motion planning for robots \cite{Farber, Ghrist}. For example, Farber's ``topological complexity'' can be thought of as measuring the difficulty of designing an algorithm for navigating the space.  As Deeley recently pointed out when he studied ``thick particles'' on metric graphs, the assumption that robots are points is not physically realistic, and giving the points thickness wildly complicates the topology of the underlying configuration space \cite{Deeley}. 

Let $\body$ be a bounded region in $\Real^d$.
Define $\confdb(n,r)$ to be the configuration space of $n$ non-overlapping
balls of radius $r$ in $\body$. We are especially interested here in understanding when the topology changes if $n$ is fixed and $r$ is varying .
First we consider the extreme cases.  For $r$ sufficiently small, one expects that  $\confdb(n,r)$ 
is homotopy equivalent to the configuration space $\conf(n)$ of
$n$ distinct points in $\body$ --- for a survey of configuration spaces of points see Cohen \cite{Cohen}.  On the other hand for $r$
sufficiently large, $\confdb(n,r)$ is empty.  Indeed finding
the smallest such $r$ is the sphere
packing problem in a bounded region --- see for example Graham et al.\
\cite{Graham1,Graham2,Graham3, Graham4} and Melissen \cite{Mel1,Mel2}.

In this note we develop a Morse-theoretic framework which provides a necessary condition for the topology to change --- mechanical balanced configurations play the role of critical points (and submanifolds).  As an illustration of the method, we find the precise threshold radius below which $\config(n,r)$ is homotopy equivalent to $\conf(n)$. 

%


%
%

\section{Tautological Morse function}
Fix $n$, and define $\conf(n)$ to be the set of ordered $n$-tuples of
distinct points in a bounded domain $\body \subset \Real^d$:  $$\conf(n) =
\{ \vx=(x_1,\ldots, x_n ) \mid x_i \in \body , x_i \neq x_j \text{ for } i \neq j \}.$$

As an open subset of $\Real^{dn}$, $\conf(n)$ has the structure of a
smooth manifold.  Let $\tauf : \conf(n) \to \Real$ be defined by
\begin{equation}\label{taut}
\tauf(\vx):=\min \left( \frac{1}{2}\min_{i\neq j} d(x_i,x_j), \min_i
  \min_{p\in \partial\body} d(x_i, p) \right),
\end{equation}
where $\partial \body$ denotes the boundary of $\body$.
We call $\tau$ the tautological function.
Then by definition the configuration space of $n$ balls of radius $r$
in $\body$ is given by
\[
\confdb(n,r)=\tauf^{-1}[r,\infty).
\]
This observation suggests using a ``Morse''-type theory of $\tauf$ 
to study the topology of $\confdb(n,r)$ and especially how the topology changes as
$r$ varies. 
One obvious trouble on that route is the fact that $\tauf$ typically
is not smooth, so that we need a general framework which
allows us to work with non-smooth functions.

In the next section we will discuss the properties of {\em
  min}-type functions, that is functions on a manifold $M$ given
as the minimum of a parametric family
of real valued functions
$$
\tauf(x):=\min_p f(p,x), x\in M, p\in P,
$$
where $P$ is a compact parameter space, and $f$ is continuously differentiable in $x$ for every fixed $p \in P$. 
We note that the function given by
(\ref{taut}) falls within this category, if one considers as $P$ the
disjoint union of the discrete set corresponding to pairs $(i, j),
1\leq i<j\leq n$ and of $n$ copies of the boundary, formed by the
pairs $(i, p), 1\leq i \leq n, p\in\partial\body$.

It should be remarked that the Morse-type theory of the {\em min} (or
even {\em min-max}) type functions has appeared in the literature
(compare \cite{matov, bryzgalova, min-type}), but in a 
much more
restrictive context (with essentially finite parameter space).

\section{Min-type Morse theory}
Let us start with some notation and definitions.

For a manifold $M$ and function $f: M \to \Real$, let $M^c$ denote the
superlevel set at $c$, i.e. $M^c = f^{-1} [c, \infty)$.

We say that a function $h: (s,t)\to \Real$, is {\em increasing with
  speed at least $v>0$} if $h(t')-h(s')\geq v(t'-s')$ for any $s'<t'$ in the
interval $(s,t)$. We note that such a function does not have to be even continuous.

We record for later use the following (immediate) result:
\begin{lemma}\label{lem:inverse}
Let $f:M\times\Real\to\Real$ be a continuous proper function, 
(strictly) 
increasing
along each fiber $\{x\}\times\Real$. Then the fiber-wise inverse
$\phi:(x,c)\mapsto \inf(t: f(x,t)=c)$ is continuous on $M \times [c_1,c_2]$ for any
interval $[c_1, c_2]$ which belongs to the ranges of all the functions
$f_x(\cdot):=f(x,\cdot)$. 
\end{lemma}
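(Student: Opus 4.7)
My approach is to verify the continuity of $\phi$ pointwise by a direct $\varepsilon$–$\delta$ argument that leans on strict monotonicity along the fibers. First I would observe that, for each fixed $x\in M$, the slice $f_x:=f(x,\cdot)\colon\Real\to\Real$ is continuous (via joint continuity of $f$) and strictly increasing by hypothesis, so $f_x$ is a homeomorphism onto its image. Consequently, for any $c$ in the range of $f_x$ there is a unique $t$ with $f(x,t)=c$, and this $t$ equals $\phi(x,c)=\inf\{t:f(x,t)=c\}$. Under the standing assumption that $[c_1,c_2]$ lies in the range of every $f_x$, the map $\phi$ is therefore well-defined on all of $M\times[c_1,c_2]$.

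Next I would prove continuity at an arbitrary point $(x_0,c_0)\in M\times[c_1,c_2]$. Set $t_0:=\phi(x_0,c_0)$ and fix $\varepsilon>0$. Strict monotonicity of $f_{x_0}$ yields
\[
f(x_0,t_0-\varepsilon)<c_0<f(x_0,t_0+\varepsilon).
\]
Joint continuity of $f$ supplies a neighborhood $U\times V$ of $(x_0,c_0)$ in $M\times\Real$ on which both strict inequalities persist, i.e.\ $f(x,t_0-\varepsilon)<c<f(x,t_0+\varepsilon)$ whenever $(x,c)\in U\times V$. Strict monotonicity of $f_x$ then forces $t_0-\varepsilon<\phi(x,c)<t_0+\varepsilon$ for all such $(x,c)$ lying in $M\times[c_1,c_2]$, which is exactly continuity of $\phi$ at $(x_0,c_0)$.

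The argument is essentially bookkeeping, and I do not anticipate a genuine obstacle. The only care-point is ensuring that $\phi(x,c)$ is unambiguously defined throughout a full neighborhood of $(x_0,c_0)$ inside the claimed domain; this is guaranteed precisely by the hypothesis that $[c_1,c_2]$ is contained in every fiber's range. Properness of $f$ plays only a background role here: combined with strict monotonicity and continuity, it ensures $f_x\colon\Real\to\Real$ is surjective, so the fiberwise inverse is a global object rather than something that must be glued from partial pieces. No additional ingredients beyond the standard openness of strict inequalities under continuous perturbations are needed.
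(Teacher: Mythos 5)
Your proof is correct, and it takes a genuinely different route from the paper's. The paper argues by contradiction via sequences: it uses properness of $f$ to bound the sequence $t_i := \phi(x_i,c_i)$, extracts a convergent subsequence $t_j \to t \neq t_*$, and then derives $f(x_*,t) = c_* = f(x_*,t_*)$, contradicting strict monotonicity of the fiber. Your argument instead works directly: strict monotonicity gives $f(x_0,t_0-\varepsilon) < c_0 < f(x_0,t_0+\varepsilon)$, joint continuity propagates these strict inequalities to a neighborhood of $(x_0,c_0)$, and the intermediate value theorem plus injectivity of $f_x$ then traps $\phi(x,c)$ in $(t_0-\varepsilon, t_0+\varepsilon)$. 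What your approach buys is elementariness and, more interestingly, the observation that properness is not actually needed here: given continuity, strict fiberwise monotonicity, and the hypothesis that $[c_1,c_2]$ lies in every fiber's range, the inverse is already continuous. The paper's sequence argument leans on properness to get boundedness, while you bypass boundedness entirely by pinning $\phi(x,c)$ locally with the IVT. (Your closing remark about properness forcing surjectivity of each $f_x$ is a correct aside, since $f_x$ is the restriction of $f$ to a closed fiber and hence itself proper, but it is not load-bearing for your argument.)
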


\pedant{
\begin{proof}
As $f$ is proper, then the $t$-projection of the 
preimage of a compact interval $[c_1, c_2]$ is compact as well. Further,
if a sequence $(x_i, c_i)$ converges to $(x_*, c_*)$, yet
$t_i:=\phi(x_i, c_i)$ fails to converge to $t_*:=\phi(x_*, c_*)$, we can, using
the boundedness of the sequence $(t_i)$,
choose a subsequence such
that along it $t_j\to t\neq t_*$. By continuity, $\lim f(x_j,
t_j)=f(x_*, t)$ (as $x_j$ converge to $x_*$).
The fact that $f(x_j,
t_j)=c_j$ implies that $f(x_*, t)$ equals $c_*$, which by the continuity of $f$
also equals $f(x_*, t_*)$.
This contradicts the assumption that $f$ increases fiber-wise.
\end{proof}
}

For a smooth vector field $V$ on $M$ we will denote the 
time $t$ shift along
the trajectories of $V$ as $S_t^V$. 
We will say that the function $f$
{\em increases along the trajectories of $V$ with non-zero speed}, if for
some common $v>0$, and for all $x\in M$, $h_x: t\mapsto f(S^V_t x)$
increases with speed at least $v$. 

\begin{lemma}\label{Morse}
  Let $M$ be a smooth manifold and $f: M \to \Real$ a continuous
  function, such that $M^a$ is compact.  Suppose that $M$
  admits a 
  smooth vector field $V$  non-vanishing on $f^{-1}[a,b]$,  
  and such that $f$ is increasing along the trajectories of $V$ 
  on the set $f^{-1}[a,b]$ with non-zero speed. 
  Then $M^b$ is a deformation retract of $M^a$.
\end{lemma}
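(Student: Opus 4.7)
The plan is to carry out the standard Morse-theoretic deformation argument adapted to this non-smooth setting: flow along trajectories of $V$ until the value of $f$ reaches $b$, and linearly rescale the time parameter to interpolate between the identity and the resulting map.

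First I would introduce a first-hitting-time function $\phi : M^a \to [0, \infty)$ by setting $\phi(x) = 0$ for $x \in M^b$ and $\phi(x) = \inf\{\, t \geq 0 : f(S^V_t x) \geq b \,\}$ for $x \in M^a \setminus M^b$. The speed-$v$ hypothesis forces $\phi(x) \leq (b - f(x))/v \leq (b-a)/v$, so $\phi$ is bounded. As $f$ is monotone along trajectories while they lie in $f^{-1}[a,b]$, starting from $x \in M^a$ the trajectory remains in the compact set $M^a$ for $t \in [0, \phi(x)]$, so the flow is defined there; strict monotonicity gives uniqueness of the hitting time, and $f(S^V_{\phi(x)} x) = b$.

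The main obstacle is continuity of $\phi$. The key observation is that just past time $\phi(x)$ the trajectory strictly overshoots $b$: if $S^V_t x$ remained in $f^{-1}[a,b]$ for $t \in (\phi(x), \phi(x) + \epsilon]$, the speed condition would give $f(S^V_{\phi(x) + \epsilon} x) \geq b + v\epsilon$, contradicting $f \leq b$ on this set. Combined with the opposite bound $f(S^V_{\phi(x) - \epsilon} x) \leq b - v\epsilon$ coming from monotonicity, joint continuity of the flow together with continuity of $f$ sandwich $\phi(x_n)$ inside $(\phi(x) - \epsilon, \phi(x) + \epsilon)$ for any sequence $x_n \to x$ in $M^a \setminus M^b$. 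Continuity at points with $f(x) = b$ follows directly from $\phi(x_n) \leq (b - f(x_n))/v \to 0$, and on the interior of $M^b$ it is immediate. (Alternatively, one could extract this from Lemma \ref{lem:inverse} applied to $F(x,t) = f(S^V_t x)$, but the direct bound is self-contained.)

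Having established continuity of $\phi$, I would define $H : M^a \times [0, 1] \to M^a$ by $H(x, s) = S^V_{s \phi(x)} x$. Continuity of $H$ follows from joint continuity of the flow and of $\phi$. By construction $H(x, 0) = x$, the map $H(\cdot, s)$ fixes $M^b$ pointwise since $\phi$ vanishes there, and $H(\cdot, 1)$ lands in $M^b$ because $f(S^V_{\phi(x)} x) \geq b$; monotonicity of $f$ along trajectories inside $f^{-1}[a,b]$ ensures $H(x, s) \in M^a$ throughout. Thus $H$ is a strong deformation retraction of $M^a$ onto $M^b$.
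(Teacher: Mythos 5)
Your proof is correct and follows essentially the same strategy as the paper: flow along $V$ until hitting level $b$, establish continuity of the hitting time, and build the retraction from that. The two minor differences are cosmetic: the paper obtains continuity of the hitting time by invoking Lemma~\ref{lem:inverse} applied to $g(x,t) = f(S^V_t x)$, while you prove it directly via the sandwich argument (which, as you note, is self-contained); and the paper parametrizes the homotopy by interpolating the \emph{target level} from $a$ to $b$ (so $H(x,\tau) = S^V_{\phi(x,(1-\tau)a+\tau b)}x$), while you linearly rescale the \emph{flow time} to the hitting time ($H(x,s)=S^V_{s\phi(x)}x$). Both parametrizations yield a strong deformation retraction and the choice is a matter of taste.
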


Lemma \ref{Morse} can be seen as a generalization of Theorem 3.1 in
\cite{Milnor} for non-smooth functions $f$. We remark here that one
can drop here the non-zero speed condition, requiring only that 
$f$ is increasing along the trajectories of $V$, but we do not need
this strengthened form in this paper.

\begin{proof}
For 
$x\in M^a$ set a partially defined function on $M^a\times \Real$
by $g(x,t):= f(S_t^Vx)$. The non-zero speed condition implies that $[a,b]$
is in the range of $g(x, \cdot)$ for any $x\in f^{-1}([a,b])$, and
together with the
compactness of $M^a$ implies that $g$ is proper. Hence, by Lemma~\ref{lem:inverse},
$\tilde\phi(x,c)=\inf(t:f(S_t^V
x)\geq c)$ is well defined and continuous, as well as $\phi(x,c)=\max(\tilde\phi(x,c),
0)$. As for any $c\leq f(x)$, $\phi(x,c)=0$, $\phi$ vanishes on $M^b\times [a,b]$. 

Now define the homotopy
$$
H:M^a\times [0,1]\to M^a
$$
as
$$
H:(x,\tau)\mapsto S^V_{\phi(x, (1-\tau)a + \tau b)}x.
$$
Continuity of $\phi$ implies continuity of $H$; the facts that
$H(x,0)=\mathit{id}_{M^a}$, 
$H(x,\tau)|_{M^b}=\mathit{id}_{M^b}$ for $0 \leq \tau \leq 1$ and $H(x, 1)\in M^b$ are immediate.
\end{proof}

\subsection{Regular values of {\em min}-type functions}

Next we will use the fact that the
tautological function $\tauf$ is the minimum of
a compact family of smooth functions. We want to establish conditions
when a value $c$ is {\em topologically regular}, that is for which there exists some $\epsilon > 0$
 such that $M^{c+\epsilon}$ is a deformation retract of $M^{c - \epsilon}$. 


We give a general condition for
topological regularity, as follows.

Let $P$ be a compact metric space, $M$ a compact smooth 
manifold with boundary and 
$$
f:P\times M\to \Real
$$
a continuous function such that the $x$-derivative of $f$ 
(that is, the gradient of $f_p$, where $f_p(x)=f(p,x)$) 
is continuous on $P\times M$. We will be talking of $P$ as the {\em
  parameter space}. 

We denote by $\tauf:=\min_{p\in P} f_p$ the {\em min}-function of the
family $f$. The set $N\subset P\times M$ defined by
$$
N:=\{(p,x): f(p,x)=\tauf(x)\}
$$
is compact, and the slices
$$
N_x:=\{p\in P: (p,x)\in N\}
$$
are {\em upper semi-continuous}: for any $x\in M$ 
and any open neighborhood $UN_x\supset N_x$
there exists an open neighborhood $Ux\ni x$
such that for $x'\in Ux$,
$N_{x'}\subset UN_x$.

Next we show that if one can perturb each $x$ to increase $\tau$ then we can do so globally with a minimum speed.

\begin{lemma}\label{mintype}

Assume that for any $x\in M$, there exists a tangent vector $V_x\in
T_xM$ such that $L_{V_x}f_p>0$ for all $p\in N_x$. Then
\begin{itemize}
\item
for  some positive
$v$
there exists smooth vector {\em field} $V$ on $M$ such that $L_Vf_p\geq v>0$ in some
open vicinity of $N$, and

\item along the trajectories of $V$, the {\em min}-function
  $\tauf$ increases with speed at least $v$.
\end{itemize}
  \end{lemma}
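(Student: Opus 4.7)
The plan is to construct $V$ locally from the pointwise vectors $V_x$, glue them by a partition of unity, and then prove the increase-along-trajectories statement via a uniform compactness argument that selects the minimizer at the \emph{later} endpoint.

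For the first bullet, fix $x_0 \in M$. Compactness of $N_{x_0}$ together with continuity of $p \mapsto L_{V_{x_0}} f_p(x_0)$ produces $v_{x_0} > 0$ with $L_{V_{x_0}} f_p(x_0) \geq 2 v_{x_0}$ for every $p \in N_{x_0}$. Extending $V_{x_0}$ to a smooth local vector field $\tilde V_{x_0}$ on a chart around $x_0$, joint continuity of the $x$-derivative of $f$ on $P \times M$ combined with upper semi-continuity of the slices $N_{x'}$ yields a neighborhood $U_{x_0}$ of $x_0$ on which $L_{\tilde V_{x_0}} f_p(x') \geq v_{x_0}$ for all $x' \in U_{x_0}$ and $p \in N_{x'}$. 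A finite subcover of $M$ by such neighborhoods together with a subordinate partition of unity $\{\rho_i\}$ gives $V := \sum_i \rho_i \tilde V_{x_i}$. Because each term with $\rho_i(x') > 0$ forces $x' \in U_{x_i}$ and hence contributes at least $v_{x_i}$ at parameters $p \in N_{x'}$, and because $\sum_i \rho_i \equiv 1$, one obtains $L_V f_p(x') \geq v' := \min_i v_{x_i}$ on $N$. A continuity-plus-compactness step then upgrades this to $L_V f_p \geq v := v'/2$ on an open neighborhood $\tilde U \supset N$ in $P \times M$.

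For the second bullet, fix a trajectory $x(t) = S_t^V x_0$ and write $g(t) := \tauf(x(t))$. By compactness of $N$ and openness of $\tilde U$, continuity of the flow produces a uniform $\delta > 0$ with $(p, S_\sigma^V x) \in \tilde U$ for every $(p, x) \in N$ and every $|\sigma| \leq \delta$. Given $0 < s \leq \delta$, pick any $p^* \in N_{x(t+s)}$ so that $g(t+s) = f(p^*, x(t+s))$. The backward-flowed arc $\{(p^*, x(t+s-\sigma)) : \sigma \in [0, s]\}$ lies entirely in $\tilde U$ by the choice of $\delta$, hence $L_V f_{p^*} \geq v$ along it, and integration gives $f(p^*, x(t+s)) - f(p^*, x(t)) \geq v s$. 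Combined with $g(t) \leq f(p^*, x(t))$, this yields $g(t+s) - g(t) \geq v s$. Uniformity of $\delta$ and $v$ in $t$ then lets one concatenate subintervals of length $\delta$ to extend the bound to any interval.

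The main subtlety—and the reason a naive approach fails—is the direction of the min in the second bullet: a minimizer at the left endpoint need not remain a minimizer at the right endpoint, so tracking $f(p_t, x(t+s))$ would only produce an \emph{upper} bound on $g(t+s)$. Selecting $p^*$ at the later endpoint and integrating \emph{backwards} is precisely what places the relevant trajectory segment inside the controlled set $\tilde U$, where the uniform lower bound $L_V f_{p^*} \geq v$ applies. Everything else reduces to standard continuity, compactness, and partition-of-unity bookkeeping.
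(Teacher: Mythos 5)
Your proof is correct and follows the paper's approach: extend the pointwise vectors to local fields, shrink neighborhoods using continuity of the $x$-derivative and upper semicontinuity of the slices $N_x$, take a finite subcover, and glue with a partition of unity. The paper dismisses the second bullet as ``immediate''; your careful argument---choosing the minimizer $p^*$ at the \emph{later} endpoint so that $\tauf(x(t))\le f(p^*,x(t))$ gives an inequality pointing the right way, then chopping into subintervals of length $\le\delta$ so the arc stays in $\tilde U$---correctly supplies the detail the paper suppresses.
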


\begin{proof} For any $x\in M$, we can extend the vector $V_x\in T_xM$
  to a smooth vector field on $M$ (which we still denote as $V_x$), 
such that $L_{V_x}f>0$ in some open vicinity $UN_x\times Ux$ of
$N_x\times\{x\}$. 
By compactness,
there exists a finite collection of points $\{x_i\}$ in $M$ such 
that the open sets $U_i:=UN_{x_i}\times Ux_i$ cover
  $N$ (and
  the open sets $Ux_i$ cover $M$), and $v>0$ such that $L_{V_i}f_p\geq v$ on $U_i$ 
 (here $V_i:=V_{x_i}$). Using a partition of unity we
  arrive at the first conclusion. The second conclusion is
  immediate.
%
%
\end{proof}


For $x\in M$ consider the intersection of the open half-spaces 
$$
H_x(p):=\{v\in T_xM: \langle df_p|_x, v\rangle >0\}
$$
over  all $p\in
N_x$. This is an open convex cone 
$$
C^o_x:=\bigcap_{p \in N_x} H_x(p)
$$
in $T_xM$.

Upper semicontinuity of $N_x$ implies lower semicontinuity of $C^o_x$;
for any $x \in M$ and any open set $V \subset TM$ intersecting $C^o_x$, there exists an open neighborhood $Ux \ni x$ such that for $x' \in Ux$, $C^o_{x'}$ intersects $V$.
In particular, if $C^o_x$ is non-empty, it remains such in a vicinity of
$x$.

Combining Lemmata \ref{Morse} and \ref{mintype} we obtain the
following 
\begin{corollary}\label{regular}
If the cones $C^o_x$ are non-empty over the level set 
$\tauf^{-1}(c)$, then $c$ is
topologically regular.
\end{corollary}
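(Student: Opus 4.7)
The plan is to thicken the level set $\tauf^{-1}(c)$ to a small strip on which the cone condition still holds, build a vector field there via Lemma \ref{mintype}, and feed that vector field into Lemma \ref{Morse}.

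For the thickening, observe first that the lower semicontinuity of $x \mapsto C^o_x$ recorded just above the corollary makes $G := \{x \in M : C^o_x \neq \emptyset\}$ an open set. By hypothesis, the compact level set $\tauf^{-1}(c)$ lies inside $G$, so continuity of $\tauf$ together with compactness of $M$ yields an $\epsilon > 0$ such that the compact strip $K := \tauf^{-1}[c-\epsilon, c+\epsilon]$ is contained in $G$. (Otherwise one could extract a sequence $x_n \in M \setminus G$ with $\tauf(x_n) \to c$ converging to some $x_\star \in \tauf^{-1}(c) \subset G$, a contradiction.)

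Next, I apply Lemma \ref{mintype} on a compact manifold-with-boundary neighborhood $K' \subset G$ of $K$; since $C^o_x$ is non-empty everywhere on $K'$, the pointwise hypothesis of that lemma is met. This produces a smooth vector field $V$ on $K'$, non-vanishing there, along whose trajectories $\tauf$ increases with speed at least some $v > 0$. Multiplying by a bump function supported in $K'$ and equal to $1$ on $K$ extends $V$ to a smooth vector field on $M$ preserving these properties on $K$. I now invoke Lemma \ref{Morse} with $a := c - \epsilon$, $b := c + \epsilon$, $f := \tauf$: the superlevel set $M^a$ is closed in the compact $M$ and hence compact, while $V$ is non-vanishing on $\tauf^{-1}[a,b] = K$ with the required increase property along trajectories. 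The resulting conclusion, that $M^{c+\epsilon}$ is a deformation retract of $M^{c-\epsilon}$, is exactly topological regularity at $c$.

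The only genuine subtlety is the gap between the global hypothesis of Lemma \ref{mintype} (cone condition on all of $M$) and the corollary's pointwise hypothesis on the level set only. It is bridged by the thickening in the second paragraph together with the observation that the partition-of-unity construction inside the proof of Lemma \ref{mintype} is local: the finite cover of $N$ can be taken to lie over $K'$, so the construction runs just as well on such a compact neighborhood as on the whole of $M$.
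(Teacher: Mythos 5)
Your argument is correct and spells out the proof the paper compresses into ``Combining Lemmata \ref{Morse} and \ref{mintype}'': the thickening of $\tauf^{-1}(c)$ to a compact strip inside the open locus where $C^o_x\neq\emptyset$ (via the lower semicontinuity remark the paper has already recorded) is the only step not already contained in the two lemmas. As you note yourself, your $K'$ need not literally be a manifold with boundary---the partition-of-unity construction in Lemma \ref{mintype} only requires compactness of $N$ restricted over the strip---so this small mismatch with the lemma's stated hypotheses is harmless.
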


For general {\em min}-type functions this is essentially the best
possible condition for the regularity of the critical values. If the
functions $f_p$ are {\em quasi-convex}, i.e. have convex lower
excursion sets $\{f_p\leq c\}$, then Corollary~\ref{regular} can be considerable
strengthened. Thus, one can show (we will do it in a follow-up paper)
that a critical value is topologically regular, if for all points at
the level set, the intersection of the {\em closed} half-spaces is a
cone over a contractible base. This observation relies on stratified
Morse theory due to Goresky and MacPherson\cite{GMPh}, but is in a nutshell close to 
the elementary result used by Connelly in his work on the existence of continuous
``unlocking'' deformations of hard ball configurations, see
\cite{Connelly}. 




Corollary \ref{regular} implies that unless the level set of the
tautological function 
$\tau^{-1}(r)$
contains a point $x$ with
$C^o_x=\varnothing$, the homotopy type of $\confdb(n,r)$ is locally
constant at $r$. 

By Farkas' lemma, the emptiness of the cone $C^o_x$ implies that there exists a
finite collection of points $p_i\in N_x,
i=1,\ldots, I\leq \dim M+1$, and 
positive weights $w_i>0$ such that 
\begin{equation}\label{convex}
\sum_i w_i {df_{p_i}|_x}=0.
\end{equation}

\section{Critical points and stress graphs}


In our hard 
spheres
setting, the vanishing of the convex combination (\ref{convex})
has a clear geometric interpretation.

For $\vx \in \conf(n,r)$, 
define a {\em stress graph} of $\vx$ to be a graph embedded in $\R^d$ whose vertices are the points $x_1, \ldots, x_n$ and boundary points $y \in \d\body$ where $d(x_i,y)=r$ for some $i$. The edges are the pairs $\{x_i,x_j\}$ where $d(x_i,x_j)=2r$ and $\{x_i,y\}$ where $d(x_i,y) = r$. Each edge $k$ is assigned a positive weight $w_k$.
The points $x_i$ are referred to as {\em internal points} and the points $y$ are referred to as {\em boundary points}.
We 
interpret this graph as a system of {\em mechanical stresses}, with
(repulsive) forces acting on the endpoints of a segment $k$ equal to
$w_k$ times the unit vector in the direction of $k$.
Call the mechanical stresses acting on boundary points {\em boundary mechanical stresses}.
Call a connected component {\em trivial} if it consists of a single point.
Call $\vx$ trivial if $\Gamma(\vx)$ has no edges.

A stress graph is said to be {\em balanced} if it satisfies the following condition.
\begin{itemize}
\item The mechanical stresses at each internal point sum to zero.\footnote{This result is similar to the necessary conditions for ``locking'', 
see e.g. \cite{Connelly}.}
\item The boundary mechanical stresses on each connected component sum to zero.
\end{itemize}
Say that the configuration is $\vx$ is {\em balanced} if it has a balanced stress graph.

Call an internal point {\em isolated} if it is not in the boundary of any edges.
For each point $x_i$ call the intersection of the stress graph with the points on the sphere $d(x_i,x)=r$ {\em kissing points} of $x_i$.
Call a kissing point that is also a boundary point a {\em boundary kissing point}.

\begin{lemma}
  Assume that $\vx \in \conf(n,r)$ is balanced. Then
  \begin{enumerate}
    \item each non-isolated internal point is in the convex hull of its kissing points, and
    \item each non-trivial connected component is contained in the convex hull of its boundary kissing points.
  \end{enumerate}
\end{lemma}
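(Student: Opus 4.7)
This part is essentially immediate from the definitions. At a non-isolated internal point $x_i$, every kissing point $q_k$ lies at distance exactly $r$ from $x_i$ (boundary neighbors by definition, midpoints of internal edges by construction, since such edges have length $2r$). The repulsive stress at $x_i$ coming from the edge through $q_k$ is $w_k(x_i-q_k)/r$, so the balance condition at $x_i$ reads $\sum_k w_k(x_i-q_k)=0$. Dividing through by $W:=\sum_k w_k>0$ expresses
$x_i=\sum_k (w_k/W)\,q_k$
as a convex combination of its kissing points.

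\textbf{Plan for part (2).} I will deduce this from part (1) by an extreme-point argument. Regard $C$ as the closed subset of $\R^d$ formed by its vertices and edges, so $C\subseteq\mathrm{conv}(V(C))$, and partition $V(C)=V_{\mathrm{int}}\sqcup V_{\mathrm{bd}}$. Two bookkeeping observations are in order: the boundary kissing points of $C$ coincide with $V_{\mathrm{bd}}$; and every kissing point of any $x\in V_{\mathrm{int}}$ already lies in $\mathrm{conv}(V(C))$ (midpoints of internal edges lie on those edges between two vertices of $C$, and boundary neighbors are themselves vertices of $C$). The plan is then to show that no $x\in V_{\mathrm{int}}$ can be an extreme point of $\mathrm{conv}(V(C))$, so that Krein--Milman (in its polytope version) forces all extreme points into $V_{\mathrm{bd}}$; consequently
$C\subseteq\mathrm{conv}(V(C))=\mathrm{conv}(\mathrm{ext}\,\mathrm{conv}(V(C)))\subseteq\mathrm{conv}(V_{\mathrm{bd}})$,
which is the claim.

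\textbf{The delicate step.} Because $C$ is non-trivial and connected, every $x\in V_{\mathrm{int}}$ is non-isolated, so part (1) yields $x=\sum_k\lambda_k q_k$ with $\lambda_k>0$ and each $q_k\neq x$ (every kissing point sits at positive distance $r$ from $x$). I expect the only mildly delicate point to be ruling out that this representation collapses trivially: if all $q_k$ coincide with some $q$, then $x=q$ contradicts $q\neq x$; otherwise, splitting off one summand and averaging the rest into $q'\in\mathrm{conv}(V(C))$ gives $x=\lambda_1 q_1+(1-\lambda_1)q'$, and if $q'=x$ one derives $x=q_1$, again contradicting $q_1\neq x$. Thus $x$ is a proper convex combination of two distinct points of $\mathrm{conv}(V(C))$ and is not extreme. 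As a byproduct, since $\mathrm{conv}(V(C))$ is a nonempty compact convex set it must have at least one extreme point, which automatically forces $V_{\mathrm{bd}}\neq\varnothing$.
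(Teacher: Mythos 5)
The paper states this lemma without proof, so there is nothing to compare against; on its own terms your argument is correct. Part (1) is the right direct computation: for a non-isolated internal $x_i$ with incident edges $k$ and kissing points $q_k$, the repulsive force from edge $k$ acting on $x_i$ is $w_k(x_i-q_k)/r$ (for an internal edge $\{x_i,x_j\}$ the midpoint $q_k=(x_i+x_j)/2$ gives $(x_i-q_k)/r=(x_i-x_j)/|x_i-x_j|$, and for a boundary edge $q_k=y$ gives $(x_i-y)/|x_i-y|$), so the balance condition $\sum_k w_k(x_i-q_k)=0$ divided by $W=\sum_k w_k>0$ exhibits $x_i$ as the claimed convex combination. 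Your extreme-point argument for part (2) is also sound: in a non-trivial connected component every internal vertex has degree at least one, hence by part (1) is a proper convex combination of points of $\mathrm{conv}(V(C))$ each at distance $r>0$ from it, so cannot be extreme; Minkowski's theorem then forces all extreme points of $\mathrm{conv}(V(C))$ into $V_{\mathrm{bd}}$, giving $C\subseteq\mathrm{conv}(V(C))=\mathrm{conv}(V_{\mathrm{bd}})$, and as you note this also shows $V_{\mathrm{bd}}\neq\varnothing$. It is worth observing that your proof of (2) uses only the first balance condition (vanishing of stresses at internal points); the second condition (vanishing of boundary stresses per component) is not needed for this lemma, though it is used in the theorem that follows.
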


Now consider $\vx \in \conf(n)$ that is a critical point of $\tau$ with critical value $r$.
In \eqref{convex}, the parameters $p_i$ correspond either to the
pairs of touching hard spheres, $d(x_{a_i}, x_{b_i})=2r$, or to the hard
sphere $x_{c_i}$ touching the boundary, $d(x_{c_i},y_i)=r$, at a point
$y_i\in\partial\body$.
Let $\Gamma(\vx)$ be the corresponding stress graph for $\vx$ with weights given by the coefficients $w_i$ in \eqref{convex}.

\begin{theorem} \label{thm:balanced}
  If $\vx \in \conf(n)$ is a critical point of $\tau$ with critical value $r$, then $\vx$ is balanced and nontrivial as a point in $\conf(n,r)$.
\end{theorem}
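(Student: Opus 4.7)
The plan is to derive the theorem in essentially three moves: invoke Corollary~\ref{regular} and Farkas' lemma to get a vanishing convex combination of gradients, read off what the gradients are, and recognize the resulting vector identity as mechanical balance.

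First I would argue that since $\vx$ is a critical point of $\tau$, the open cone $C^o_\vx$ must be empty. Indeed, if $C^o_\vx$ were non-empty, lower semicontinuity of $x\mapsto C^o_x$ would make $C^o_{x'}$ non-empty throughout a neighborhood of $\vx$ in the level set, and by a compactness argument the cones could be taken non-empty on all of $\tau^{-1}(r)$, whence Corollary~\ref{regular} would make $r$ topologically regular, contradicting criticality. Applying Farkas' lemma to $C^o_\vx=\varnothing$ produces a finite subset $\{p_i\}\subset N_\vx$ and positive weights $w_i>0$ with $\sum_i w_i\, df_{p_i}|_\vx=0$ in $T_\vx \conf(n)=(\Real^d)^n$.

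Next I would make this equation explicit block-by-block. The parameters $p\in N_\vx$ are of two kinds: touching pairs $(i,j)$ with $f_{(i,j)}(\vx)=\tfrac{1}{2}|x_i-x_j|$, where $|x_i-x_j|=2r$; and boundary contacts $(i,y)$ with $f_{(i,y)}(\vx)=|x_i-y|$, where $y\in\partial\body$ realizes the distance $r$. Writing $u_{ab}$ for the unit vector from $a$ to $b$, the gradient of $f_{(i,j)}$ has $k$-th block $\tfrac{1}{2}u_{ji}$ when $k=i$, $\tfrac{1}{2}u_{ij}$ when $k=j$, and $0$ otherwise; the gradient of $f_{(i,y)}$ has $k$-th block $u_{yi}$ when $k=i$ and $0$ otherwise. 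Absorbing the factor $\tfrac{1}{2}$ into the internal-edge weights, the $k$-th block of the Farkas identity becomes
\[
\sum_{j:\,(k,j)\in N_\vx} w_{kj}\,u_{jk}\;+\;\sum_{y:\,(k,y)\in N_\vx} w_{ky}\,u_{yk}\;=\;0,
\]
which is precisely the condition that the mechanical stresses at the internal point $x_k$ sum to zero. Taking the $w_i$'s as weights on the corresponding edges of $\Gamma(\vx)$ yields condition~(1) of balance.

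For condition~(2) I would argue by a Newton's-third-law bookkeeping. Let $C$ be a connected component of $\Gamma(\vx)$. Every edge of $C$, whether internal-internal or internal-boundary, has both endpoints in $C$ and contributes a pair of stresses $w(u_{ab}+u_{ba})=0$ to the total force on $C$. Hence the sum of all stresses at all vertices of $C$ (internal and boundary) vanishes. Having already established that the stresses at each internal vertex of $C$ sum to zero, the boundary stresses on $C$ must also sum to zero, giving condition~(2).

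Finally, nontriviality is essentially automatic: since $\tau$ is a minimum of a compact family and $\tau(\vx)=r$, the set $N_\vx$ is non-empty, so $\Gamma(\vx)$ contains at least one edge and $\vx$ is nontrivial in $\conf(n,r)$. The only real friction I expect in writing this out cleanly is the tracking of signs and the stray $\tfrac{1}{2}$ coming from the normalization in \eqref{taut}, but since these amount to a relabeling of the positive weights produced by Farkas, they cause no substantive trouble.
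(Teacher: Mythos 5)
Your proof follows the paper's own argument closely: Farkas at the critical point gives \eqref{convex}, read blockwise it is internal balance, Newton's-third-law bookkeeping over a component gives boundary balance, and nontriviality comes for free. Two small points are worth flagging.

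First, the opening paragraph tries to derive $C^o_\vx=\varnothing$ from criticality, but the bridging step (``by a compactness argument the cones could be taken non-empty on all of $\tau^{-1}(r)$'') is a non sequitur: local non-emptiness near $\vx$ says nothing about other points on the same level set, which may well have empty cones. In the paper, Corollary~\ref{regular} and the surrounding discussion implicitly \emph{define} a critical point to be one with $C^o_\vx=\varnothing$, so the detour is unnecessary; one should simply start from \eqref{convex}.

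Second, the nontriviality argument reasons from $N_\vx\neq\varnothing$, which is automatic for any $\vx$ by compactness of $P$ and therefore proves too much. The edges of $\Gamma(\vx)$ are, in the paper's usage, the parameters $p_i$ appearing in the Farkas combination \eqref{convex} with weights $w_i>0$; the correct reason $\Gamma(\vx)$ has at least one edge is that Farkas' lemma applied to an empty cone in a nonzero-dimensional tangent space always produces a non-trivial combination (equivalently, no single $df_p$ vanishes, so a one-term combination is impossible). The paper says exactly this: ``since the sum in \eqref{convex} is nontrivial.'' Neither issue affects the substance; the blockwise gradient computation and the component-wise stress bookkeeping are exactly right.
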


\begin{proof}
Consider $\vx \in \conf(n,r)$ and consider $\Gamma(\vx)$.
From \eqref{convex} it follows that the mechanical stresses at each internal point  sum to zero. 


The sum of mechanical stresses in each connected component equals the sum of mechanical stresses on internal points and the sum of external mechanical stresses. From \eqref{convex} and the first observation it follows that the boundary mechanical stresses on each connected component sum to zero.

Finally, since the sum in \eqref{convex} is nontrivial, 
$\vx$ is nontrivial.
\end{proof}

\section{Hard spheres in a box}
Consider now in more detail the case of hard spheres in a
rectangular box with sides $L:=L_1\leq  \ldots\leq L_d$, given, for
definitiveness sake, by
\[
\body=\{0\leq f_m \leq L_m, m=1,\ldots, d\}.
\] 
(Here $\{f_m\}$ is the orthonormal coordinate system on $\Real^d$.)

\subsection{Initial interval}

We 
show that Theorem~\ref{thm:balanced} implies a lower bound on the length of the
initial interval of values of $r$, where the homotopy type of
$\confdb(n,r)$ remains constant.

\begin{theorem}
  For the rectangular box $\body$, there are no critical values of
  $\tauf$ in $(0, L/2n)$, and therefore, 
\[
\confdb(n,r) \simeq \conf(n)
\]
for $r<L/2n$.
  
\end{theorem}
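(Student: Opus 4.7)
My plan is to argue by contradiction: suppose $r \in (0, L/(2n))$ were a critical value of $\tauf$ with critical configuration $\vx$. Theorem~\ref{thm:balanced} then gives a balanced nontrivial stress graph $\Gamma(\vx)$, so some connected component $C$ is nontrivial. I will extract from $C$ a chain in $\body$ whose Euclidean length is forced to be at least $L$, yet built from at most $n$ steps of size $2r < L/n$---a contradiction.

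The first step is to show that $C$ must span some opposite pair of faces of $\body$. I will begin by noting that each boundary vertex of $\Gamma(\vx)$ has degree exactly one, because any ball kissing a boundary point $y$ on a face with inward normal $e_m$ must have center $x = y + r e_m$, which pins the ball down uniquely (a boundary vertex can never sit on a lower-dimensional stratum of $\bound$, since the center of any tangent ball would then have distance strictly greater than $r$ from $y$). Next, I will use internal balance plus an extreme-coordinate argument to show $C$ has at least one boundary vertex: otherwise, the vertex of $C$ maximizing any coordinate $f_m$ would, being a strict convex combination of its neighbors in $C$, force its neighbors to share its $f_m$-value; iterating makes $f_m$ constant on $C$ for every $m$, so all vertices of $C$ coincide---violating non-overlap. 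Finally, boundary balance implies that if $y \in C$ lies on the face $\{f_{m^*} = 0\}$, then the negative $f_{m^*}$-contribution $-w_y$ to the vanishing boundary-stress sum must be offset by a boundary vertex $y' \in C$ on the opposite face $\{f_{m^*} = L_{m^*}\}$.

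The second step is the length count. A simple path $y = v_0, v_1, \ldots, v_e = y'$ in $C$ has all intermediate $v_i$ internal (since degree-one boundary vertices cannot sit in the path's interior); its two end edges have length $r$ and its $e - 2$ middle edges have length $2r$, so the total Euclidean path length is $2r(e - 1)$. Projecting onto the $f_{m^*}$-axis and using $L_{m^*} \geq L_1 = L$ gives $2r(e - 1) \geq L$, forcing $e - 1 \geq L/(2r) > n$. But the $e - 1$ distinct internal vertices of the path are drawn from only $n$ centers, contradicting this inequality, so no such critical value exists.

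To finish, I will invoke Lemma~\ref{Morse} together with Corollary~\ref{regular} to conclude that every value in $(0, L/(2n))$ is topologically regular, and patch finitely many level-wise deformation retracts over any compact subinterval $[a, b] \subset (0, L/(2n))$; since $\conf(n) = \bigcup_{a > 0} \confdb(n, a)$ is a nested union, letting $a \to 0^+$ exhibits $\conf(n) \simeq \confdb(n, r)$ for any $r < L/(2n)$. I expect the main obstacle to be the span-forcing step, where the internal and boundary halves of the balance condition must be combined in tandem to inject actual box geometry into what is otherwise a purely combinatorial graph---internal balance alone only forbids self-supporting clusters in the interior, and boundary balance alone only says each axis must be balanced; only together do they produce a path that crosses the box. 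Once such a path exists, the counting argument is essentially immediate.
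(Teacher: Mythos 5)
Your proof is correct and follows essentially the same route as the paper: invoke Theorem~\ref{thm:balanced} to get a balanced nontrivial stress graph, use boundary balance to force a pair of boundary vertices on opposite faces, and then bound the component's diameter by $2nr$ (your explicit path-length count $2r(e-1)$ is just a spelled-out version of the paper's "at most $n$ internal points, hence diameter at most $2nr$"). The extra work you do — proving via the extremal-coordinate argument that a nontrivial component must touch the boundary, and patching the nested deformation retracts as $a \to 0^+$ — fills in steps the paper leaves implicit (the first is the paper's unproven convex-hull lemma), but the overall strategy is the same.
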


\begin{proof}
  Assume that $\vx \in \conf(n)$ is a critical value for $\tau$ with critical value $r$.
  Then by Theorem~\ref{thm:balanced}, $\vx$ is balanced and nontrivial.
  A connected component of $\Gamma(\vx)$ contains at most $n$ internal points and thus has diameter at most $2nr$.
  Since $\vx$ is nontrivial, it has at least one nontrivial connected component. It is contained in the convex hull of its boundary points, so it contains at least one boundary point. Since the boundary mechanical stresses of this connected component sum to zero, it must contain a pair of boundary points from opposing faces. Thus the diameter of this connected component is at least $L$.
Therefore $r \geq L/2n$.
\end{proof}

We remark that the balanced stress graph of minimal diameter is not necessarily a
segment, for non-rectangular boxes. For example, for the ``concave
triangle'', it is a cone over three points, see Figure \ref{fig:triangle}.

\begin{figure}[htb]
  \centering
  \includegraphics[height=2in]{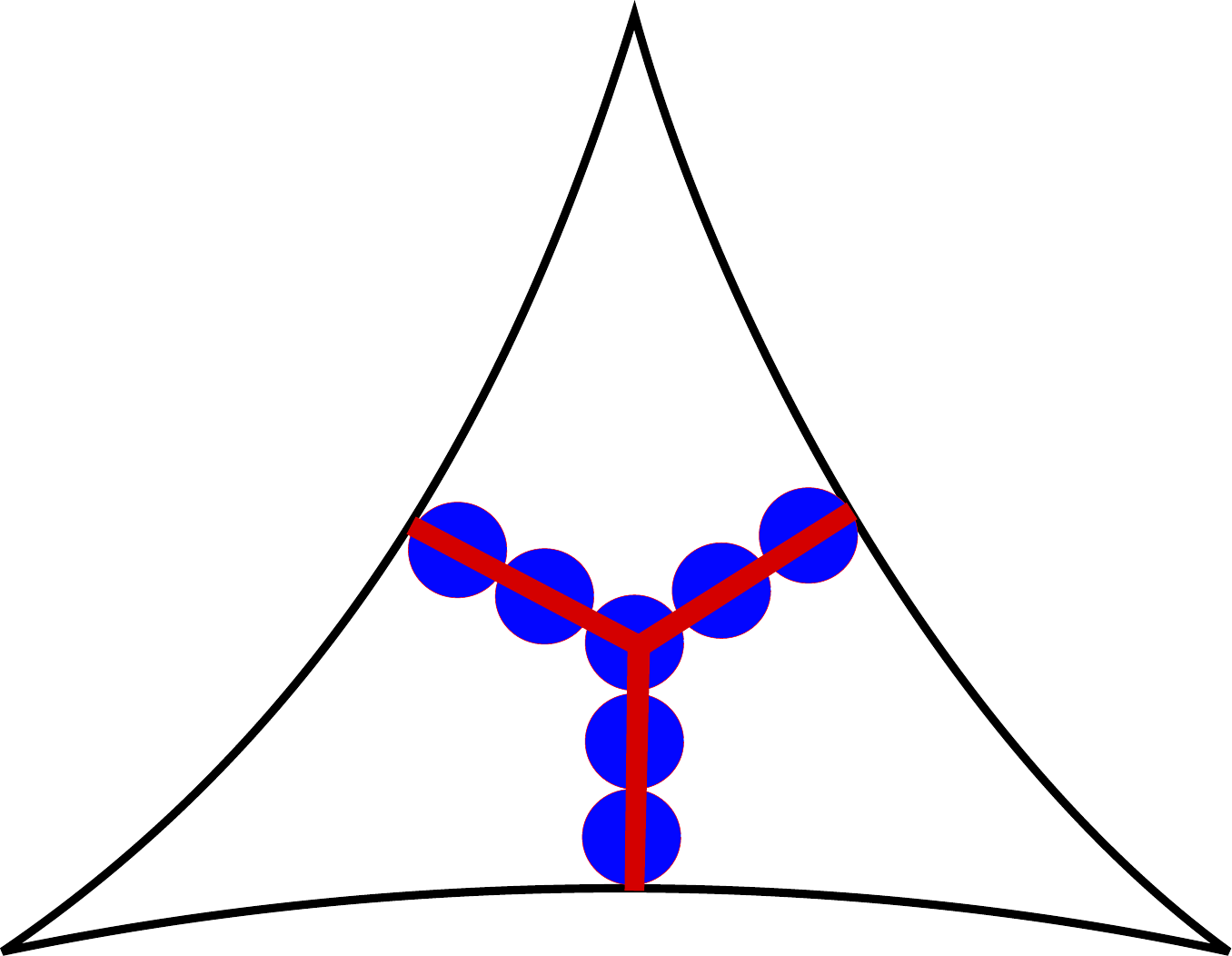}
  \caption{Minimal length stress graph}
  \label{fig:triangle}
\end{figure}






\subsection{First perestroika} \label{sec:perestroika}
A natural question is now to ask, whether there is a topology change
as $nr$ goes above the minimal length of the stress graph. We
concentrate in the rest of the note on the case of the rectangular box
with the shortest side of length $L$, and will investigate, whether
\[
i:\confdb(n,r')\to \confdb(n,r), r'=L/2n+\epsilon, r=L/2n-\epsilon
\]
is a homotopy equivalence, for small enough $\epsilon$.

We argue that it is not, by presenting
explicit 
nontrivial 
$(dn-n-d)$-cycles in $\ker(Hi)\subset
H_{dn-n-d}(\confdb(n,r'),\Int)$. 

Indeed, let $0<\epsilon<L/2n(n-1)$ (so that $(n-1)$ disks of radius $r'$
would fit within the box when arranged in a vertical column, and $n$
would not), 
and consider the
set $S_\epsilon$ of $n$-point configurations in $\body$ given by the conditions
\begin{itemize}
\item $x_1$ fixed is at distance $r'$ from the center of the face
  $\{f_1=0\}$,
\item $|x_{i+1}-x_i|=2r'$ for $i=1,\ldots n-1$, and 
\item $x_n$ is at distance $r'$ from the face $\{f_1=L_1\}$.
\end{itemize}
In other words, we consider the configurations for which the $n$ disks
touch each other and the opposite horizontal faces, forming a chain,
see Figure \ref{fig:chain}.

\begin{figure}[htb]
  \centering
  \includegraphics[height=2in]{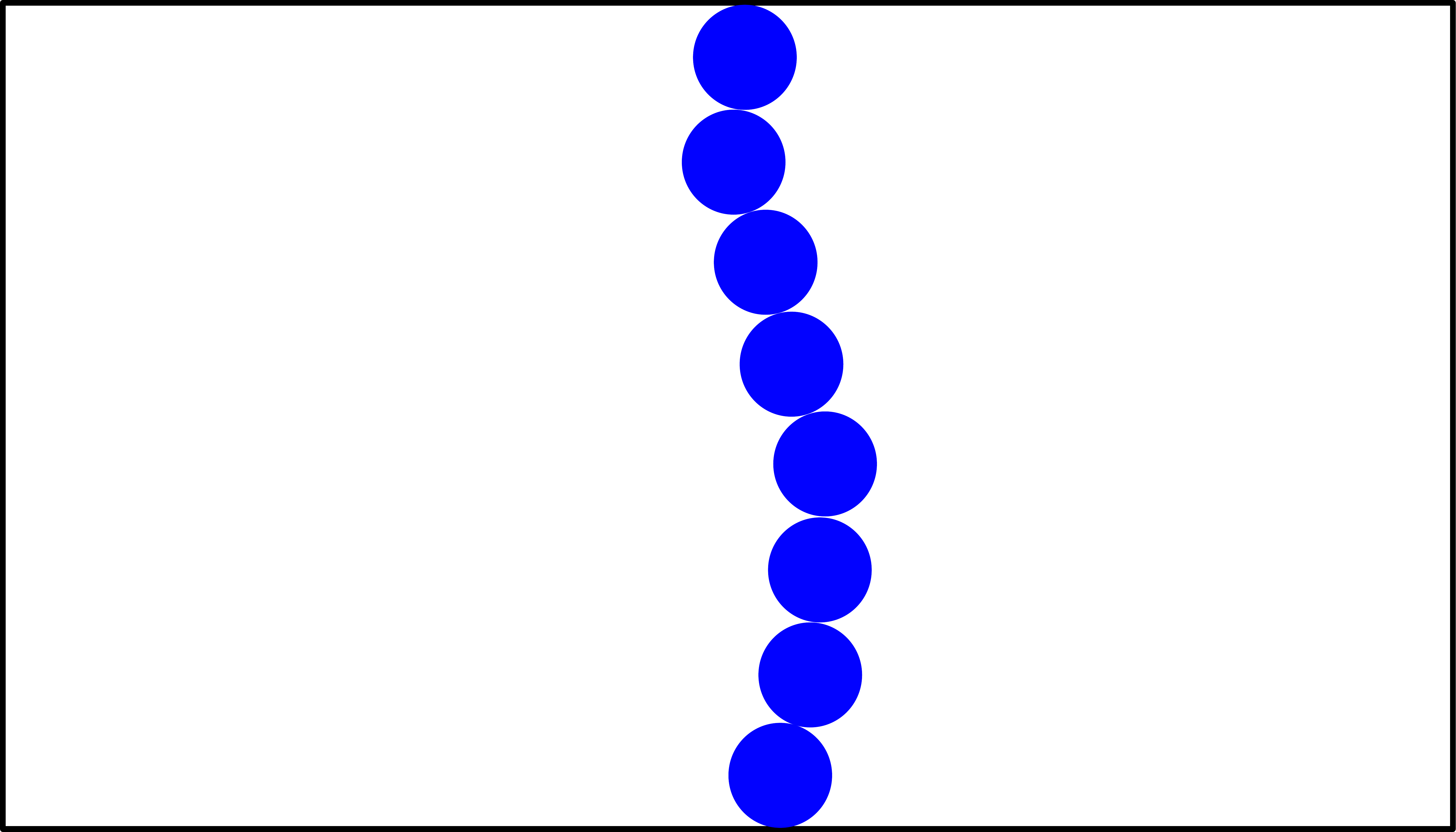}
  \caption{A configuration in $S_\epsilon$.}
  \label{fig:chain}
\end{figure}
An immediate computation shows that $S_\epsilon$ is diffeomorphic to a
$(nd-n-d)$-dimensional sphere. Orient it in some way, obtaining a
class $s\in H_{nd-n-d}(\confdb(n,r'))$.

Next 
we show that $s$ is nontrivial by constructing a cohomology class with which it has a nontrivial pairing.

Consider now the set $\Sigma$ of configurations in
$\body^n$
given by 
\begin{itemize}
\item all points $x_1,\ldots,x_n$ have the same coordinates $f_3,
  \ldots,f_d$;
\item all points
  $x_2,\ldots, x_n$ have the same coordinates $f_2,\ldots,f_d$;
\item the $f_1$ coordinates of $x_2,\ldots, x_n$ satisfy
$$
f_1(x_1)\geq r; f_1(x_{i+1})-f_1(x_i)\geq 2r, \mathrm{for\,}
i=2,\ldots, n-1; f_1(x_n)\leq L-r,
$$
and
\item $f_2(x_1)\leq f_2(x_2)$.
\end{itemize}

In other words, the configuration consists of $n-1$ vertically
aligned nonoverlapping 
$r$-disks constrained to have the same $(f_1,f_2)$-plane (with the
same $f_3,\ldots_d$ coordinates as
the disk $x_1$), see Figure \ref{fig:float}.

\begin{figure}[htb]
  \centering
  \includegraphics[height=2in]{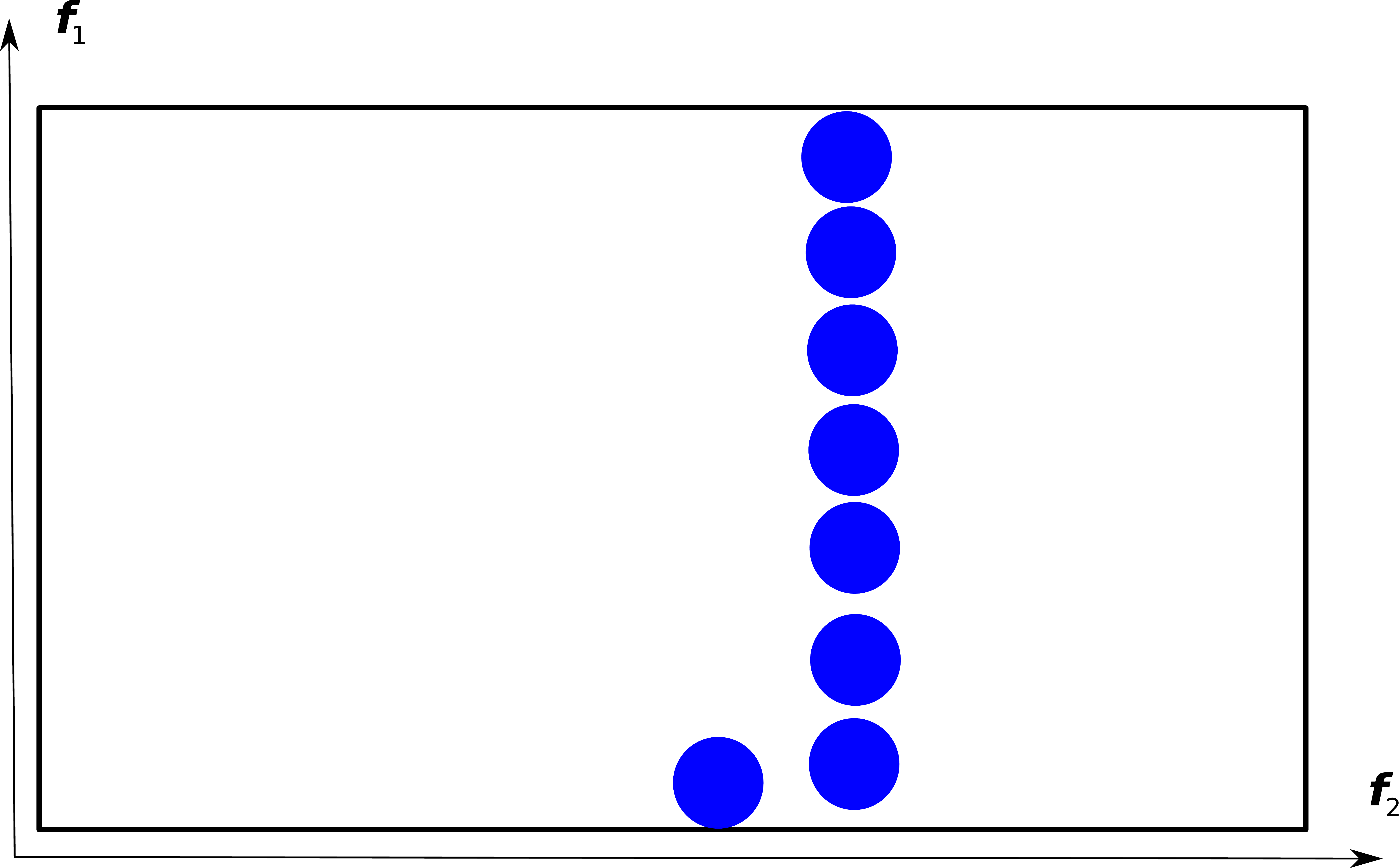}
  \caption{A configuration in $\Sigma$.}
  \label{fig:float}
\end{figure}

The conditions above are given by a finite collection of linear
equalities and inequalities, and therefore define a 
convex polyhedron of dimension $d+n$. The boundary of this
polyhedron is in $\body^n-\confdb(n,r')$, 
whence, upon orientation it defines 
a relative class $\sigma\in H_{n+d}(\body^n,
\body^n-\confdb(n,r'))$. 

We notice that the space $\body^n$ of $n$-tuples of 
points in $\body$ can be embedded into the $nd$-dimensional sphere
$S^{nd}$ (consider a large ball containing $\body^n$ and contract its boundary
to a point). By excision and the long exact sequence for a pair, 
$H_{n+d}(\body^n, \body^n-\confdb(n,r'))\cong H_{n+d}(S^{nd},
S^{nd}-\confdb(n,r')) \cong H_{n+d-1}(S^{nd}-\conf(n,r'))$. By Alexander duality the class
$\sigma$ can be identified with a class (which we still denote by
$\sigma$)
in
$H^{nd-n-d}(\confdb(n,r'))$. 

\begin{lemma}
The pairing between the classes $s$ and $\sigma$ is non-trivial: 
$s\cdot\sigma=\pm 1$.
\end{lemma}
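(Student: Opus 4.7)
\emph{Strategy.} The plan is to identify the Kronecker pairing $\langle\sigma,s\rangle$ with the signed intersection number $S_\epsilon\cdot\Sigma$ computed inside $\body^n$, and to show that $S_\epsilon$ meets $\Sigma$ transversally at exactly one point. The class $\sigma$ was defined as the Alexander/Lefschetz dual of the relative cycle $[\Sigma]\in H_{n+d}(\body^n,\body^n\setminus\confdb(n,r'))$, so the pairing with $[S_\epsilon]\in H_{nd-n-d}(\confdb(n,r'))$ coincides with the classical intersection pairing $[\Sigma]\cdot[S_\epsilon]$. Since $S_\epsilon\subset\confdb(n,r')$ while $\d\Sigma\subset\body^n\setminus\confdb(n,r')$, every intersection point lies in the relative interior of $\Sigma$, and a transverse intersection gives a well-defined signed count.

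\emph{The intersection is a single point.} A configuration $\vx\in\Sigma$ has $x_2,\dots,x_n$ collinear along a line parallel to the $f_1$-axis, with $x_1$ sharing the $f_3,\dots,f_d$ coordinates. If moreover $\vx\in S_\epsilon$, then $x_1$ is the prescribed fixed point, so $f_j(x_i)=L_j/2$ for all $i$ and all $j\geq 3$. The touching conditions $|x_{i+1}-x_i|=2r'$ for $i\geq 2$ reduce, via the alignment, to $f_1(x_{i+1})-f_1(x_i)=\pm 2r'$; non-overlap inside $\confdb(n,r')$ forces this $1$-dimensional walk to be self-avoiding, hence monotone (any sign change returns to a previously visited position). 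The endpoint $f_1(x_n)=L-r'$, the requirement $f_1(x_i)\in[r',L-r']$, and the hypothesis $\epsilon<L/(2n(n-1))$ together rule out the monotone-decreasing case and pin down $f_1(x_i)=(2i-1)r'-2n\epsilon$ for $i\geq 2$. Finally, $|x_2-x_1|=2r'$ together with $f_2(x_1)\leq f_2(x_2)$ uniquely determines the common $f_2$-coordinate of $x_2,\dots,x_n$ as $c=L_2/2+2\sqrt{n\epsilon(2r'-n\epsilon)}$. Hence $S_\epsilon\cap\Sigma$ is a single point $\vx^*$, and a direct check shows all inequalities defining $\Sigma$ are strict at $\vx^*$, placing it in the relative interior.

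\emph{Transversality and conclusion.} Writing $v=(v_1,\dots,v_n)\in(\R^d)^n$, the tangent space $T_{\vx^*}\Sigma$ is cut out by $f_j(v_i)=f_j(v_1)$ for $j\geq 3$ and $f_2(v_i)=f_2(v_2)$ for $i\geq 2$ (dimension $n+d$); the tangent space $T_{\vx^*}S_\epsilon$ is cut out by $v_1=0$, $\langle v_{i+1}-v_i,x_{i+1}-x_i\rangle=0$ for $i=1,\dots,n-1$, and $f_1(v_n)=0$ (dimension $nd-n-d$). For $i\geq 2$, $x_{i+1}-x_i$ is parallel to the $f_1$-axis, so the orthogonality relations collapse to $f_1(v_{i+1})=f_1(v_i)$; combined with $f_1(v_n)=0$ this forces $f_1(v_i)=0$ for $i\geq 2$. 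The $i=1$ orthogonality, using that $x_2-x_1$ has a nonzero $f_2$-component when $\epsilon>0$ and that $v_1=0$, then gives $f_2(v_2)=0$. Intersecting with the $\Sigma$-equalities forces $v\equiv 0$; since the dimensions add to $nd$, the intersection is transverse. The unique transverse point thus contributes $\pm 1$, so $s\cdot\sigma=\pm 1$. The main obstacle is the monotonicity step: eliminating the $2^{n-2}-1$ nonmonotone walks and the monotone-decreasing walk, which is precisely where the hypothesis $\epsilon<L/(2n(n-1))$ is used.
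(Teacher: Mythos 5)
Your proof is correct and takes the same intersection-theoretic approach as the paper, whose entire proof is the one-line assertion that $S_\epsilon$ and $\Sigma$ meet transversally in a single point; you have simply supplied the explicit identification of that point and the tangent-space computation verifying transversality. One small correction to your closing remark: for $i\ge 2$ the defining inequalities of $\Sigma$, namely $f_1(x_{i+1})-f_1(x_i)\ge 2r>0$, already force the $f_1$-walk to be strictly increasing, so the hypothesis $\epsilon<L/(2n(n-1))$ is not really what eliminates the nonmonotone walks; its role is rather to guarantee $r'>n\epsilon$ (so the $f_2$-offset $2\sqrt{n\epsilon(2r'-n\epsilon)}$ is real and positive) and, upstream, to ensure the chain fits in the box so that $S_\epsilon$ is a nonempty sphere.
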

\begin{proof}
Indeed, the manifolds $S_\epsilon$ and $\Sigma$ intersect 
transversally at a single point.
\end{proof}

As one can observe,
there exists a retraction of $S_{\epsilon}$ to a point staying within $\confdb(n,r)$, implying that 
the class
$s$ is in the kernel of $Hi$. Indeed, we first can reduce all the
distances between by shrinking the differences between the adjacent
chain centers so that
$$
x_{i+1}-x_i\mapsto \frac{tr-(1-t)r'}{r'} (x_{i+1}-x_i), t=[0,1];
i=1,\ldots,n-1
$$
and $x_1$ remains fixed 
(clearly, this homotopy keeps the configuration in
$\confdb(n,r)$). Then one can pull all the vectors $ (x_{i+1}-x_i)$ so
that they point vertically upwards\footnote{We think of $f_1$ as
  height.} 
(as not one was initially pointing downwards).

For each permutation $\pi$ of indices $1,\ldots,n$ one obtains
different classes $s_\pi$, and 
one can easily see that the pairing with the corresponding $n!$
classes $\sigma_\pi$ is non-degenerate (because corresponding $S_\epsilon$ and
$\Sigma$'s are all geometrically distinct). Hence, the rank of the kernel
of $Hi$ is at least $n!$.

We notice that the sphericity of the 
set
$S_\epsilon$ does not
depend on the fact that the stress graph is a chain. For the
configuration on Figure \ref{fig:triangle}, the corresponding 
set is diffeomorphic to
a sphere as well: it is just the corollary of the first
critical value coming from a {\em topologically Morse} critical point,
compare \cite{matov}.

\subsection{Betti numbers}

We can also compute how the Betti numbers change across the first threshold.
Set $ r_* = L / 2n$, and note that as the tautological function is
semi-algebraic for semi-algebraic regions, its critical values are
isolated. As the only balanced stress graphs in the case of a
rectangular domain are the chains spanning the shortest dimension, for
some small $\epsilon$ there are no other critical values in $(r_*-\epsilon, r_*+\epsilon)$.


It is well known \cite{Cohen} that the configuration space $\config(n)$ of $n$
(labeled) points in $\R^d$  has Poincar{\'e} polynomial
\begin{align*}
P(t) & := \sum_{i \ge 0} \beta_i t^i\\
	& = \prod_{i=1}^{n-1}  \left( 1 + i t^{d-1} \right)\\
	& = 1 + \dots + (n-1)! H_{n-1} t^{(n-2)(d-1)}+(n-1)!t^{(n-1)(d-1)},\\
\end{align*}
where $$H_{n-1} = \sum_{i=1}^{n-1} 1/i.$$

This tells us the Betti numbers of $\config(n , r_* -\epsilon)$, since we have already shown that $\config(n,r_*-\epsilon)$  is homotopy equivalent to 
$\config(n)$.  We wish to compute the Betti numbers of $\config(n, r_* + \epsilon)$.

Let $N = (n-1)(d-1)$.  As we shrink the disks across the critical value $r_* = L / 2n$,  to the configuration space  we attach $k \, n!$ cells of dimension $N$, where $k$ is the largest number such that $L_k = L$, whose boundaries are representatives for the homology classes $s$ defined in Section~\ref{sec:perestroika}.  Each of these cells either increments $\beta_N$ or decrements $\beta_{N-1}$.

The first observation is that $$\beta_i [ \config(n, r_* + \epsilon) ] = \beta_i [ \config(n, r_* - \epsilon) ] $$ for $ i \le N-2$.  
As (the proof to appear in a follow-up paper) one can show that
$\beta_i(\conf(n,r))=0$ for $i\geq N$ and $r>r_*$,
so in particular  $$\beta_i [ \config(n, r_* + \epsilon) ] = 0$$ for $i \ge N$.   
Thus $(n-1)!$ of the $N$-cells increase $\beta_N$.
This leaves only $\beta_{N-1}$ to compute.

Every $N$-cell that does not contribute to $\beta_N$ decreases $\beta_{N-1}$.  Since we know that $kn!$ cells are added, $(n-1)!$ of them contributing to $\beta_N$, we have
$$\beta_{N-1} [ \config(n, r_* + \epsilon) ] = \beta_{N-1} [\config(n, r_* - \epsilon)] + k \, n! - (n-1)! \; ,$$
and since  
\begin{displaymath}
  \beta_{N-1}[\conf(n,r_*-\epsilon)] = \left\{
    \begin{array}{ll}
      (n-1)! H_{n-1} & : d=2\\
      0 & : d \ge 3,
    \end{array}
  \right.  
\end{displaymath}
we have
\begin{displaymath}
  \beta_{N-1}[\conf(n,r_*+\epsilon)] = \left\{
    \begin{array}{ll}
      (H_{n-1}+ kn-1)(n-1)! & : d=2\\
      (kn - 1)(n-1)! & : d \ge 3.
    \end{array}
  \right.  
\end{displaymath}

\section{Concluding remarks}

In a future article we will discuss non-degeneracy of critical points, which is closely related to the question of making our necessary condition for a change in the topology sufficient.  We also discuss defining and computing the index of critical points, and especially investigate more of the asymptotic properties of $\conf(n,r)$ as $n \to \infty$.  In particular we obtain bounds on the rate of growth of Betti numbers.

An important special case for which little seems known is: What is the threshold radius $r=r(n)$ for connectivity of $\conf(n,r)$?  This is an important question physically, since for example ergodicity of any Markov process hinges on connectivity of the state space.  Diaconis, Lebeau, and Michel noted that $r \le c / n$ is sufficient to guarantee connectivity of $\config(n,r)$ \cite{DLM} and this is best possible for certain regions.  It would be interesting to know if connectivity of the configuration space ever extends into the thermodynamic limit, i.e.\ are there any bounding regions so that $\config(n,r)$ is connected for $r \le C n ^{-1/d}$ and some constant $C > 0$? 

\section*{ {\bf Acknowledgments} } 
We thank AIM and for hosting the workshop on, ``Topological complexity
of random sets'' in August 2009, where we started discussing some of
these problems.  Y.B. was supported in part by the ONR grant
00014-11-1-0178. 
M.K. thanks IAS for hosting him this year
and Robert MacPherson for several helpful conversations. 

\bibliographystyle{plain}
\bibliography{hard}

\end{document}